\documentclass[12pt,xcolor=svgnames]{amsart}

\usepackage{amsmath, amsthm, amssymb, amsfonts, verbatim}

\usepackage{xspace,mathtools,bm}

\usepackage[svgnames]{xcolor}

\usepackage[pagebackref=true, colorlinks=true, citecolor=blue]{hyperref}

\allowdisplaybreaks

\hypersetup{
    colorlinks=true,
    citecolor=red,
    linkcolor=blue,
    filecolor=magenta,      
    urlcolor=cyan,
    pdfpagemode=FullScreen,
    }
\newtheorem{theorem}{Theorem}[section]

\newtheorem{lemma}[theorem]{Lemma}
\newtheorem{definition}[theorem]{Definition}

\newtheorem{remark}[theorem]{Remark}

\newcommand{\R}{\mathbb{R}}

\newcommand{\abs}[1]{\mid\!#1\!\mid}
\newcommand{\norm}[2]{\left\lVert#1\right\rVert_{#2}}

\newcommand{\Obsolete}[1]{
    }

\DeclareMathOperator\supp{supp}

\DeclareFontFamily{U}{mathx}{}
\DeclareFontShape{U}{mathx}{m}{n}{<-> mathx10}{}
\DeclareSymbolFont{mathx}{U}{mathx}{m}{n}
\DeclareMathAccent{\widehat}{0}{mathx}{"70}
\DeclareMathAccent{\widecheck}{0}{mathx}{"71}

\date{\today}
\usepackage[english]{babel}

\author{Zachary Radke}
\address{Oregon State University, Department of Mathematics}
\email{radkeza@oregonstate.edu}
\title[Logarithmically Regularized Euler Equations]{Sharp Global Well-Posedness for the Logarithmically Regularized $2$D Euler Equations}
\date{\today}

\begin{document}

\maketitle
\begin{abstract}
We consider solutions to the two-dimensional Euler equations with a logarithmically regularized velocity given by $u=\nabla^{\perp}(-\Delta)^{-1}\ln^{-\gamma}(|\nabla|+e)\omega$ and initial vorticity in $\dot{H}^1 \cap \dot{H}^{-1}(\R^2)$. We show that if $\gamma>1/2$, then global well-posedness holds.
\end{abstract}

\section{Introduction}
The incompressible Euler equations on $\mathbb{R}^2$ in velocity form are given by
\begin{equation}\label{VelocityEquation}\tag{E}
\begin{cases}
    \partial_tu+(u\cdot\nabla) u = -\nabla p \quad &\text{on }\mathbb{R}^2\times(0,\infty),
    \\ \nabla\cdot u=0 \quad &\text{on }\mathbb{R}^2\times[0,\infty),
    \\ u|_{t=0}=u_0 \quad &\text{on }\mathbb{R}^2,    
\end{cases}    
\end{equation}
where $u:\mathbb{R}^2\times [0,\infty)\rightarrow \mathbb{R}^2$ denotes the velocity and $p:\mathbb{R}^2\times[0,\infty)\to\mathbb{R}$ denotes the internal scalar pressure. 
The 2D incompressible Euler equations can be recast in terms of the \textit{vorticity}, defined by $$\omega:=-\nabla^\perp \cdot u=\partial_{x_1}u_2-\partial_{x_2}u_1.$$ One can show using the incompressibility condition that $(\ref{VelocityEquation})$ is equivalent to the 2D vorticity equation, given by
\begin{equation}\label{VorticityEquation}\tag{V} 
\begin{cases}
    \partial_t\omega+(u\cdot\nabla)\omega = 0 \quad &\text{on }\mathbb{R}^2\times(0,\infty),
    \\ u = \nabla^\perp(-\Delta)^{-1}\omega  &\text{on }\mathbb{R}^2\times[0,\infty),
    \\ \omega|_{t=0}=\omega_0 &\text{on }\mathbb{R}^2.
\end{cases}
\end{equation}
It is well known that the system \eqref{VelocityEquation} is globally well posed in $W^{s,p}(\R^2)$ for $s>\frac{2}{p}+1$ and $1<p<\infty$, see \cite{katoponce86} for example. It was a long-standing open question as to what happened at the scaling-critical set of parameters $s=\frac{2}{p}+1$. In the work \cite{vishik98},  Vishik considers the Besov space $B^{2/p+1}_{p,1}(\R^2)$ (see Definition \ref{besov space}) for $1<p<\infty$, and establishes global well-posedness for \eqref{VelocityEquation}. Similarly, the work of Chae in \cite{chae03} established global well-posedness for \eqref{VelocityEquation} for initial velocity in $F^{3}_{1,q}(\R^2)$ for any $1<q<\infty.$ The key idea in both of these works is recognizing that both $B^{2/p+1}_{p,1}(\R^2)$ and $F^{3}_{1,q}(\R^2)$ embed into $B^1_{\infty,1}(\R^2)$, allowing for a priori control of the velocity gradient. Then, using paradifferential calculus, Vishik establishes a composition estimate of a $B^0_{\infty,1}(\R^2)$ function with a measure-preserving transformation, which, using the transport structure of \eqref{VorticityEquation}, shows the $B^0_{\infty,1}(\R^2)$ norm of the vorticity grows at most exponentially in time, establishing global well-posedness.
\\
\\
Instead of considering initial velocity in function spaces that embed into $B^1_{\infty,1}(\R^2)$ (allowing for global well-posedness), another approach to salvaging well-posedness is to regularize the Biot-Savart law. In particular, we consider the logarithmically regularized Euler equations given by 
\begin{equation}\label{logVorticityEquation}\tag{V$_{\gamma}$} 
\begin{cases}
    \partial_t\omega+(u\cdot\nabla)\omega = 0 \quad &\text{on }\mathbb{R}^2\times(0,\infty),
    \\ u = \nabla^\perp(-\Delta)^{-1}T_{\gamma}\omega  &\text{on }\mathbb{R}^2\times[0,\infty),
    \\ \omega|_{t=0}=\omega_0 &\text{on }\mathbb{R}^2,
\end{cases}
\end{equation}
where $T_{\gamma}=T_{\gamma}(|\nabla|)$ is an operator defined by the Fourier\footnote{Throughout this paper we use the convention 
$
    \hat{f}(\xi) := \int_{\mathbb{R}^d} f(x) e^{-2\pi i x \cdot \xi}\, dx $ to denote the Fourier transform of a sufficiently nice function. $f:\mathbb{R}^d\to\mathbb{R}$.}multiplier 
\begin{equation}
\label{Fourier Multiplier}
\mathcal{F}\left(T_{\gamma}\omega\right)(\xi)
    =\frac{1}{\ln^{\gamma}(e+|\xi|)}\mathcal{F}(\omega)(\xi).
\end{equation}
This model provides logarithmic smoothing of the velocity field, and one has a priori control of the velocity gradient for $\gamma>\frac{1}{2}$ and $\omega \in H^1(\R^2)$. Indeed, note that 
\begin{equation}
\label{apriori velocity gradient}
|\nabla u(x)| \leq \int_{\R^2} \ln^{-\gamma}(e+|\xi|)|\hat{\omega}\,(\xi)|\,d\xi \leq C\norm{\omega}{L^2}+ C\left(\int_1^\infty \frac{dt}{t\ln^{2\gamma}(e+t)} \right)^{1/2}\norm{\omega}{\dot{H}^1},
\end{equation}
and the latter integral is finite when $\gamma >\frac{1}{2}$. Utilizing this, the authors in \cite{CW12} establish local well-posedness of \eqref{logVorticityEquation} for $\omega_0 \in H^1(\R^2)$ and $\gamma >\frac{1}{2}$. However, the authors were not able to establish global well-posedness in this class. In particular, they prove global existence for $\omega_0 \in W^{1,p}(\R^2)$ for $p>2$ by exploiting the following bound from \cite{KT2000}
$$
\norm{f}{L^{\infty}} \leq C\left(1+\norm{f}{\text{BMO}}\ln(e+\norm{f}{W^{1,p}}\right)), \quad \text{ for $p>2$}.$$
Later, in the work \cite{DL15}, Dong and Li establish 
$$
\norm{\nabla\nabla^{\perp}(-\Delta)^{-1}\ln^{-\gamma}(|\nabla|+e)f}{L^{\infty}} \leq C_1\ln(\norm{f}{H^1}+e)\sup_{2 \leq p<\infty} \frac{\norm{f}{L^p}}{\sqrt{p}} , \quad \text{for } \gamma\geq \frac{3}{2},
$$
where $C_1$ is some absolute constant. This provides the necessary logarithmic control of the velocity gradient, establishing global well-posedness for $\gamma \geq \frac{3}{2}$. Ill-posedness for \eqref{logVorticityEquation} has been studied as well. In the work \cite{kwon21}, for $0<\gamma\leq \frac{1}{2}$, the author establishes strong ill-posedness. Without a priori control of the velocity gradient, Kwon shows there is smooth initial data which is arbitrarily small in $H^1(\R^2)$, yet can be perturbed by a highly oscillatory function creating norm inflation in short time. Kwon then patches together many local solutions, creating larger norm inflation, establishing instantaneous blow-up of the $H^1(\R^2)$-norm.
\\
\\
To the best of the author's knowledge, the case when $\frac{1}{2}<\gamma<\frac{3}{2}$ remains open. In this work, we show that global well-posedness holds throughout this range. To do this, using the logarithmic smoothing of the velocity, we can bound the $L^{\infty}(\R^2)$ norm of the velocity gradient by $\norm{\omega_0}{L^2} + \norm{((1+j)^{-\gamma})\|\Delta_j\omega\|_{L^{\infty}})_j}{l^1(\mathbb{Z}_{\geq 0})}$, which is finite by Cauchy-Schwarz provided that $\gamma >\frac{1}{2}$. In particular, we note that 
\begin{equation}
\norm{\nabla u}{L^{\infty}} \leq C\norm{\omega_0}{L^2} + C_{\gamma}\norm{\omega(t)}{B^0_{\infty,2}}, \quad \text{for $\gamma>\frac{1}{2}$},
\end{equation}
and then show that the $B^0_{\infty,2}(\R^2)$-norm of the vorticity can grow at most exponentially in time. We state our main theorem below.
\begin{theorem}
\label{main theorem}
Let $\gamma >\frac{1}{2}$ and $\omega_0 \in \dot{H}^1 \cap \dot{H}^{-1}(\R^2)$. Then there exists a unique solution to \eqref{logVorticityEquation} satisfying
$$
\omega \in C^0([0,\infty); \dot{H}^{-1}\cap \dot{H}^1(\R^2)).
$$
\end{theorem}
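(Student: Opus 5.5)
We combine a local well-posedness result with a continuation criterion and an a priori bound. Local existence and uniqueness in $\dot H^{-1}\cap\dot H^1$ for $\gamma>\frac12$ follows as in \cite{CW12}: mollify the data, use the a priori Lipschitz bound \eqref{apriori velocity gradient}, and pass to the limit. The $\dot H^{-1}$ part is harmless, since $\nabla^\perp(-\Delta)^{-1}T_\gamma\omega\in L^2$ and the velocity energy is controlled. Uniqueness comes from an $\dot H^{-1}$ (velocity-level) energy estimate for the difference of two solutions, using $\nabla u\in L^\infty$. The same estimates give the continuation criterion: the solution extends past $T$ as long as $\int_0^T\norm{\nabla u(t)}{L^\infty}\,dt<\infty$. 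Indeed, differentiating the transport equation gives
$$\frac{d}{dt}\norm{\nabla\omega}{L^2}\le \norm{\nabla u}{L^\infty}\norm{\nabla\omega}{L^2},$$
and $\norm{\omega}{\dot H^{-1}}$ is controlled similarly through the velocity equation. Gr\"onwall's inequality then bounds the $\dot H^1$ norm on $[0,T]$.

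It therefore suffices to bound $\norm{\nabla u}{L^\infty}$ on every finite interval. Decompose $\omega$ with Littlewood--Paley blocks. The low frequencies are bounded by $\norm{\omega}{L^2}=\norm{\omega_0}{L^2}$; this quantity is conserved, and $\omega_0\in L^2$ by interpolation between $\dot H^{-1}$ and $\dot H^1$. For $j\ge0$, the operator $\nabla\nabla^\perp(-\Delta)^{-1}$ restricted to frequencies $\sim2^j$ is bounded on $L^\infty$, and the symbol $\ln^{-\gamma}(e+|\xi|)$ contributes a factor $\lesssim(1+j)^{-\gamma}$. Cauchy--Schwarz then gives
$$\norm{\nabla u}{L^\infty}\le C\norm{\omega_0}{L^2}+C\Big(\sum_j (1+j)^{-2\gamma}\Big)^{1/2}\norm{\omega}{B^0_{\infty,2}},$$
which is the displayed estimate with $C_\gamma<\infty$ exactly when $\gamma>\frac12$.

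The key step, and the main obstacle, is to show
$$\norm{\omega(t)}{B^0_{\infty,2}}\le C\norm{\omega_0}{B^0_{\infty,2}}\Big(1+\int_0^t\norm{\nabla u}{L^\infty}\Big),$$
or at least a bound of this type that is linear in $\int\norm{\nabla u}{L^\infty}$. We also need $\omega_0\in B^0_{\infty,2}$, which holds because $\dot H^1\hookrightarrow B^0_{\infty,2}$ in 2D (Bernstein gives $\norm{\Delta_j f}{L^\infty}\lesssim 2^j\norm{\Delta_j f}{L^2}$). To prove the key estimate, I would follow Vishik's argument for $B^0_{\infty,1}$.

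1. Write $\omega(t)=\omega_0\circ X_t^{-1}$ with $X_t$ measure preserving.
2. Split $\omega_0=\sum_k\Delta_k\omega_0$, and estimate $\norm{\Delta_j(\Delta_k\omega_0\circ X_t^{-1})}{L^\infty}$. For $|j-k|\gtrsim\int_0^t\norm{\nabla u}{L^\infty}$, use $\norm{\nabla X^{\pm1}}{L^\infty}\le e^{\int\norm{\nabla u}{}}$ and integrate by parts to get geometric decay $2^{-|j-k|}$-type factors.
3. Handle the near-diagonal band of width $N\approx\int_0^t\norm{\nabla u}{L^\infty}$ by $L^\infty$ boundedness: $\norm{\Delta_j g}{L^\infty}\lesssim\norm{g}{L^\infty}$.
4. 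In $\ell^2$, the band costs a factor $N$ by Young's inequality, rather than the $\ell^1$ counting in Vishik's argument.

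The subtle part is that the pure composition bound gives exponential rather than linear dependence. In Vishik's approach one instead splits the time interval into pieces on which $\int\norm{\nabla u}{}\le1$ and iterates. This yields
$$\norm{\omega(t)}{B^0_{\infty,2}}\le C\norm{\omega_0}{B^0_{\infty,2}}\Big(1+\int_0^t\norm{\nabla u}{L^\infty}\Big),$$
possibly after using that $\norm{\omega}{L^\infty}$ is conserved to absorb the low-high interactions.

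Combining the key estimate with the bound on $\nabla u$ gives
$$\norm{\nabla u(t)}{L^\infty}\le A+B\int_0^t\norm{\nabla u}{L^\infty}.$$
Gr\"onwall's inequality then shows $\norm{\nabla u(t)}{L^\infty}$ grows at most exponentially. Even a weaker exponential-in-$\int\norm{\nabla u}{}$ bound would only give a double-exponential estimate, which still suffices as long as it is finite on finite time intervals. The continuation criterion then yields the global solution $\omega\in C^0([0,\infty);\dot H^{-1}\cap\dot H^1)$, and uniqueness is inherited from the local theory.
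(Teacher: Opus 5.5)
Your main line is correct and is the paper's own proof. It has three parts: the continuation criterion through $\int_0^T\|\nabla u\|_{L^\infty}\,dt$ (cf.\ \eqref{local estimate}), the Cauchy--Schwarz bound of Lemma~\ref{velocity gradient 1}, and your steps 1--4. Those steps are exactly the proof of Theorem~\ref{Key Besov estimate}: transport each $\Delta_q\omega_0$ separately, show $\|\Delta_j a_q(t)\|_{L^\infty}\le C\min\{1,e^{CV(t)}2^{-|j-q|}\}\|\Delta_q\omega_0\|_{L^\infty}$, and apply Young's inequality in $\ell^2$. These steps alone already give linear dependence on $V(t)=\int_0^t\|\nabla u\|_{L^\infty}$, and nothing more is needed. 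For $N=\lceil CV(t)/\ln 2\rceil$,
\[
\sum_{m\in\mathbb{Z}}\min\{1,e^{CV(t)}2^{-|m|}\}\le 2N+1+2e^{CV(t)}2^{-N}\le C(1+V(t)).
\]

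The paragraph after step 4 should be deleted, because each of its claims is wrong.

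\textbf{Time-splitting.} Splitting $[0,t]$ into $n\approx V(t)$ pieces with $\int\|\nabla u\|_{L^\infty}\le 1$ and iterating $\|\omega(t_{k+1})\|_{B^0_{\infty,2}}\le C\|\omega(t_k)\|_{B^0_{\infty,2}}$ produces a factor $C^{n}$. That is growth exponential in $V$, not linear. The linear bound comes only from decomposing the \emph{initial} datum, as in your steps 1--4.

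\textbf{Conservation of $\|\omega\|_{L^\infty}$.} You cannot use it, since $\dot H^{-1}\cap\dot H^1(\R^2)\not\hookrightarrow L^\infty$. The paper applies the maximum principle only to the pieces $a_q$, whose data $\Delta_q\omega_0$ are bounded.

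\textbf{The fallback.} This is the most important error: an exponential-in-$V$ bound does not suffice. If you only had $\|\omega(t)\|_{B^0_{\infty,2}}\lesssim e^{CV(t)}$, then $V'\le A+Be^{CV}$. This inequality controls $V$ only up to the blow-up time of the comparison solution of $W'=A+Be^{CW}$, $W(0)=0$. That time is at most $1/(BC)$, because $\frac{d}{dt}e^{-CW}\le -BC$. So you get only local-in-time control, no better than the local theory. Linearity in $V$ is exactly what lets the Gr\"onwall argument close globally. It is what gives an exponential bound on $\|\nabla u\|_{L^\infty}$ and a double-exponential bound on $\|\omega\|_{\dot H^1}$.

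A minor point: it is $H^1$, not $\dot H^1$, that embeds into the inhomogeneous $B^0_{\infty,2}$. The $\Delta_{-1}$ block needs $\|\omega_0\|_{L^2}$, which you do have by interpolation.
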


\begin{remark}  We remark, as the authors did in \cite{DL15}, that the role of $\omega_0 \in \dot{H}^{-1}(\R^2)$ is not artificial. This is since the local well-posedness argument in \cite{DL15} needs $u \in L^{\infty}(\R^2)$ and $\omega_0 \in H^1(\R^2)$ is not sufficient to handle the low frequencies arising from the Biot-Savart law (however, note that $\norm{\omega_0}{L^2}^2 \leq \norm{\omega_0}{\dot{H}^{-1}}\norm{\omega_0}{\dot{H}^1}$). 
\end{remark}

\subsection{Notation and Organization of Paper}
We first introduce the notation used throughout this paper. We let $C>0$ denote a positive constant that may change from line to line. If we wish to highlight the dependence of the constant $C$ on a parameter $b$, we write $C=C_b$. If $X$ and $Y$ are normed vector spaces, we write $X \hookrightarrow Y$ if $X \subseteq Y$ and $\norm{f}{Y}\leq C\norm{f}{X}$ for all $f \in X$. We write $\ln$ for the natural logarithm.
\\
\\
We organize the paper as follows. Section \ref{section prelim} covers preliminaries such as Littlewood-Paley theory, function spaces, and estimates on the velocity gradient. Section \ref{section proof} covers the proof of the main theorem.
\section{Preliminaries}
\label{section prelim}

\subsection{Littlewood-Paley Theory} 
Before defining our function spaces, we begin with a brief overview of the Littlewood-Paley operators. We closely follow \cite{Chemin95}.\\
\\
It is well known that if $\mathcal{C}$ is the annulus of center $0$, inner radius $3/5$, and outer radius $5/3$, then there exist two positive radial functions $\hat{\chi} \in C_c^{\infty}(B_{5/6}(0))$ and $\hat{\varphi}\in C_c^{\infty}(\mathcal{C})$ such that, for $j \in \mathbb{Z}$, we set $\varphi_j(x)=2^{jd}\varphi(2^jx)$ and
$$
\hat{\chi}+\sum_{j\geq 0} \hat{\varphi}_j = \hat{\chi} + \sum_{j \geq 0} \hat{\varphi}(2^{-j}\cdot)
 \equiv 1.$$
For $n\geq 0$ we define $\chi_n$ by $\chi_n(x)=2^{nd}\chi(2^nx)$, so that its Fourier transform satisfies
$$
\hat{\chi}_n(\xi) = \hat{\chi}(2^{-n}\xi) \text{ for } \xi \in \R^d.
$$
For a tempered distribution $f \in \mathcal{S}'(\R^d)$, we define the operator $S_n$ as 
$$
S_n f =\chi_n \ast f.
$$
Formally, $S_n$ can be thought of as a projection in Fourier space into the ball of radius of order $2^n$. For $f \in \mathcal{S}'(\R^d)$ we define the inhomogeneous Littlewood-Paley operators $\Delta_j$ by 
\begin{equation*}
\label{LP operator}
\Delta_j f
=
\begin{cases}
    0, \hspace{1cm} &\text{if }j < -1,\\
    \chi \ast f, &\text{if } j=-1, \\
    \varphi_j \ast f, &\text{if } j\geq 0,
\end{cases}
\end{equation*}
and for all $j\in \mathbb{Z}$ we define the homogeneous Littlewood-Paley operators $\dot{\Delta}_j$ by
$$
\dot{\Delta}_j f = \varphi_j \ast f.
$$
 Up to constants, one can formally think of $\dot{\Delta}_j$ as a projection in Fourier space into the annulus of inner and outer radius of order $2^j$. For any $u \in \mathcal{S}'(\R^d)$, we have that $u = \lim_{n \to \infty} S_nu$ where the limit holds in $\mathcal{S}'(\R^d)$. Or equivalently
$$
u = \sum_{j\geq -1} \Delta_j u \text{ in } \mathcal{S}'(\R^d).
$$
One benefit of using Littlewood-Paley operators can be seen in Bernstein's Lemma. Heuristically, Bernstein's Lemma states that if the support of $\hat{u}$ is included in some ball $B_{\lambda}(0)$, then a derivative of $u$ costs no more than a power of $\lambda$ when taking an $L^p$ norm. This will be made precise below. We let $\mathcal{C}_{a,b}(0)$ denote the annulus with inner radius $a$ and outer radius $b$.

\begin{lemma}(Bernstein's Lemma) Let $(r_1,r_2)$ be a pair of strictly positive real numbers such that $r_1<r_2$ and let $\lambda$ denote a positive real number. Then there exists a constant $C>0$, such that, for every integer $k$ with $k \geq 0$, every pair of real numbers $(a,b)$ satisfying $b \geq a \geq 1$, and every function $u \in L^a(\R^d)$,
\begin{enumerate}

\item 
\label{Bernstein in Ball}
If $\supp{\hat{u}} \subset B_{r_1\lambda}(0)$ then
\begin{equation*}
 \sup_{\abs{\alpha} = k} \norm{\partial^{\alpha}u}{L^b} \leq C^k\lambda^{k+d(\frac{1}{a}-\frac{1}{b})}\norm{u}{L^a}.
\end{equation*}\\
\item 
\label{Bernstein in Annulus}
Furthermore, if $\supp{\hat{u}} \subset \mathcal{C}_{r_1\lambda, r_2\lambda}(0)$ then
\begin{equation*}
 C^{-k}\lambda^k\norm{u}{L^a} \leq  \sup_{\abs{\alpha} = k} \norm{\partial^{\alpha}u}{L^a} \leq C^k\lambda^k\norm{u}{L^a}.
\end{equation*}
\end{enumerate}
\end{lemma}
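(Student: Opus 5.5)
The plan is to prove both parts by writing $u$ as a convolution with a rescaled Schwartz function and applying Young's inequality. The constant depends only on $r_1,r_2$ and $d$, not on $\lambda$.

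\textbf{Reduction to $\lambda=1$.} Set $u_\lambda(x)=u(x/\lambda)$. Then $\widehat{u_\lambda}$ is supported in $B_{r_1}(0)$, respectively in $\mathcal{C}_{r_1,r_2}(0)$. Moreover $\|\partial^\alpha u_\lambda\|_{L^b}=\lambda^{-|\alpha|+d/b}\|\partial^\alpha u\|_{L^b}$. The powers $\lambda^{k+d(1/a-1/b)}$ and $\lambda^k$ in the statement come exactly from this rescaling. Hence it suffices to treat $\lambda=1$.

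\textbf{Part (1).} Fix $\theta\in C_c^\infty(\R^d)$ with $\theta\equiv1$ on $B_{r_1}(0)$. Then $u=\check\theta\ast u$, so $\partial^\alpha u=(\partial^\alpha\check\theta)\ast u$. Young's inequality with $1+1/b=1/c+1/a$ gives
$$\|\partial^\alpha u\|_{L^b}\le\|\partial^\alpha\check\theta\|_{L^c}\|u\|_{L^a}.$$
It remains to bound $\|\partial^\alpha\check\theta\|_{L^c}\le C^{k}$ uniformly in $c\in[1,\infty]$ and $|\alpha|=k$. By interpolation, $\|g\|_{L^c}\le\|g\|_{L^1}+\|g\|_{L^\infty}$, so only the $L^1$ and $L^\infty$ norms matter. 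For $L^1$, write
$$\|g\|_{L^1}\le C\|(1+|x|^2)^{d}g\|_{L^\infty}\le C\|(\mathrm{Id}-\Delta_\xi)^{d}\hat g\|_{L^1}.$$
For $g=\partial^\alpha\check\theta$ we have $\hat g=(2\pi i\xi)^\alpha\theta$. Leibniz' rule then bounds the right side by $C(2\pi R)^{k}k^{2d}$, where $\supp\theta\subset B_R$. This is at most $C'^k$, and the $L^\infty$ bound is the same without the weight.

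\textbf{Part (2).} The upper bound is Part (1) with $b=a$, since an annulus lies in a ball. For the lower bound, the key identity is $|\xi|^{2k}=\sum_{|\alpha|=k}\frac{k!}{\alpha!}\xi^{2\alpha}$. Take $\tilde\varphi\in C_c^\infty$ supported away from $0$ and equal to $1$ on $\mathcal{C}_{r_1,r_2}(0)$. Then
$$\hat u=\sum_{|\alpha|=k}\frac{k!}{\alpha!}\,\frac{(-i\xi)^\alpha\tilde\varphi(\xi)}{|\xi|^{2k}}\,\widehat{\partial^\alpha u}\cdot(2\pi)^{-k}.$$
Let $g_\alpha$ be the inverse transform of $(-i\xi)^\alpha\tilde\varphi|\xi|^{-2k}$. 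Young's inequality gives
$$\|u\|_{L^a}\le\sum_{|\alpha|=k}\frac{k!}{\alpha!}\|g_\alpha\|_{L^1}\sup_{|\alpha|=k}\|\partial^\alpha u\|_{L^a}.$$
We have $\sum_{|\alpha|=k} k!/\alpha!=d^k$. The weighted $L^1$ argument above bounds $\|g_\alpha\|_{L^1}\le C^k$: the symbol is smooth on the annulus, and its $2d$ derivatives produce at most polynomial factors in $k$ times $r_1^{-k}$-type powers.

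\textbf{Main obstacle.} The step that needs care is showing the constants grow only geometrically in $k$. The qualitative bounds are routine; the geometric growth comes from tracking the factorial and Leibniz factors, which are polynomial in $k$ for a fixed number $2d$ of $\xi$-derivatives.
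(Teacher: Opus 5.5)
Your argument is the standard one (the paper does not prove this lemma; it quotes it from Chemin): rescale to $\lambda=1$, write $u=\check\theta*u$ and apply Young, control $\partial^\alpha\check\theta$ in $L^1\cap L^\infty$ through $(\mathrm{Id}-\Delta_\xi)^d$, and get the reverse inequality from $|\xi|^{2k}=\sum_{|\alpha|=k}\frac{k!}{\alpha!}\xi^{2\alpha}$. For $k\ge1$ this all goes through. The step that fails is ``this is at most $C'^k$'' in Part (1). Your Leibniz computation actually gives $\|\partial^\alpha\check\theta\|_{L^c}\le C_0(1+k)^{2d}M^k$ with $M=2\pi\max(1,R)$. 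The factor is $(1+k)^{2d}$, not $k^{2d}$, because the term in which no derivative falls on $\xi^\alpha$ carries no power of $k$. A prefactor $C_0$ can be absorbed into $C^k$ only when $k\ge1$. At $k=0$ you would need $\|\check\theta\|_{L^c}\le 1$, which fails in general; for instance $\|\check\theta\|_{L^\infty}\ge\check\theta(0)=\int\theta\ge|B_{r_1}(0)|$ when $\theta\ge0$.

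This is not a weakness of your method: with $C^k$ the statement itself is false for $k=0$, $b>a$. Take $\lambda=1$, $a=1$, $b=\infty$, let $\eta\in C_c^\infty(B_{r_1/2}(0))$ be real, even and nonnegative, and set $\hat u=\eta*\eta$. Then:
\begin{itemize}
\item $\supp\hat u\subset B_{r_1}(0)$ and $u=(\check\eta)^2\ge0$;
\item $\|u\|_{L^1}=\hat u(0)=\|\eta\|_{L^2}^2$;
\item $\|u\|_{L^\infty}\ge u(0)=(\int\eta)^2$.
\end{itemize}
Letting $\eta\to\mathbf{1}_{B_{r_1/2}(0)}$, the ratio $\|u\|_{L^\infty}/\|u\|_{L^1}$ tends to $|B_{r_1/2}(0)|$. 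This exceeds $C^0=1$ once $r_1$ is large, and your own rescaling shows the same failure for every $\lambda$.

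The correct statement, which is exactly what your argument proves, has $C^{k+1}$ in Part (1), as in Bahouri--Chemin--Danchin. In Part (2) the $C^{\pm k}$ form is harmless: both inequalities are identities at $k=0$, and the prefactor can be absorbed for $k\ge1$. The paper always invokes the lemma with an unspecified constant $C$ in front, so nothing downstream is affected.
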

\noindent
When estimating the gradient of the velocity, we make use of the following lemma from \cite{DL15}.
\begin{lemma}
\label{generalized bernstein}
Let $m \in C^{d+1}(\R^d \setminus \{0\})$ be such that for any dyadic $N>0$, there exists a constant $\tilde{m}(N)$ so that 
$$
\sup_{\frac{N}{8} \leq |\xi| \leq 8N} |\partial_{\xi}^{\alpha}m(\xi)| \leq C_d \frac{\tilde{m}(N)}{N^{|\alpha|}}, \quad \text{ for all } |\alpha| \leq d+1.
$$
Let $T_m$ be the associated operator defined by the Fourier multiplier
$$
\mathcal{F}(T_mf)(\xi) = m(\xi)\mathcal{F}(f)(\xi).
$$
Then for any integer $j \geq 0$ and $1 \leq p \leq \infty$ we have 
$$
\norm{T_m \Delta_jf}{L^p} \lesssim_{d} \tilde{m}(2^j)\norm{\Delta_jf}{L^p}.
$$
\end{lemma}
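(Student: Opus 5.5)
The plan is to write $T_m\Delta_j$ as convolution against a kernel whose $L^1$ norm is at most $C_d\,\tilde m(2^j)$, and then apply Young's inequality. This handles all $1\le p\le\infty$ at once, including $p=\infty$.

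\emph{Step 1: localize the multiplier.} Fix a smooth radial $\tilde\psi$ with $\tilde\psi\equiv 1$ on the annulus $\{1/2\le|\xi|\le 2\}$ and $\supp\tilde\psi\subset\{1/8\le|\xi|\le 8\}$. Since $j\ge 0$, the support of $\hat\varphi(2^{-j}\cdot)$ lies in $\{\tfrac35 2^j\le|\xi|\le\tfrac53 2^j\}$, where $\tilde\psi(2^{-j}\cdot)\equiv1$. Hence
\[
T_m\Delta_j f=K_j\ast\Delta_j f,\qquad \hat K_j(\xi)=m(\xi)\tilde\psi(2^{-j}\xi).
\]
Young's inequality then gives $\norm{T_m\Delta_jf}{L^p}\le\norm{K_j}{L^1}\norm{\Delta_jf}{L^p}$, so it suffices to show $\norm{K_j}{L^1}\le C_d\,\tilde m(2^j)$.

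\emph{Step 2: rescale.} Write $K_j(x)=2^{jd}k_j(2^jx)$ with $\hat k_j(\xi)=m(2^j\xi)\tilde\psi(\xi)$. Then $\norm{K_j}{L^1}=\norm{k_j}{L^1}$, and $\hat k_j$ is supported in the fixed annulus $\{1/8\le|\xi|\le8\}$.

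By the chain rule, $\partial^\alpha_\xi[m(2^j\xi)]=2^{j|\alpha|}(\partial^\alpha m)(2^j\xi)$. For $|\xi|\in[1/8,8]$ the point $2^j\xi$ lies in $[N/8,8N]$ with $N=2^j$ dyadic, so the hypothesis gives $|\partial^\alpha_\xi[m(2^j\xi)]|\le C_d\,\tilde m(2^j)$ for all $|\alpha|\le d+1$. By Leibniz, and since the derivatives of $\tilde\psi$ are fixed bounded functions on a compact set, we get
\[
\sum_{|\alpha|\le d+1}\norm{\partial^\alpha\hat k_j}{L^1_\xi}\le C_d\,\tilde m(2^j).
\]

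\emph{Step 3: convert to decay of $k_j$.} Integration by parts gives
\[
(2\pi i)^{|\alpha|}x^\alpha k_j(x)=\pm\mathcal F^{-1}(\partial^\alpha\hat k_j)(x),
\]
and we use this with $\alpha=0$ and with $\alpha=(d+1)e_i$ for $i=1,\dots,d$. Combining with $|x|^{d+1}\le C_d\sum_i|x_i|^{d+1}$ yields
\[
(1+|x|)^{d+1}|k_j(x)|\le C_d\sum_{|\alpha|\le d+1}\norm{\partial^\alpha\hat k_j}{L^1}\le C_d\,\tilde m(2^j).
\]
Since $(1+|x|)^{-(d+1)}\in L^1(\R^d)$, this gives $\norm{k_j}{L^1}\le C_d\,\tilde m(2^j)$, which completes the argument.

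The only delicate point is Step 3. Exactly $d+1$ derivatives are needed for the weight $(1+|x|)^{-(d+1)}$ to be integrable in $\R^d$, and this is precisely the regularity the hypothesis provides. The restriction $j\ge0$ is what keeps the frequency support in an annulus away from the origin, where the dyadic bounds on $m$ apply; the block $j=-1$ would not be covered.
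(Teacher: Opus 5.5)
Your proof is correct and complete. The paper does not prove this lemma itself; it quotes it from Dong--Li, so there is no in-paper argument to compare against.

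What you wrote is the standard kernel estimate:
\begin{itemize}
\item You cut off with a fattened annulus whose range $\{1/8\le|\xi|\le 8\}$ matches exactly the region where the hypothesis gives symbol bounds.
\item You rescale to unit frequency.
\item You convert the $d+1$ symbol derivatives into $(1+|x|)^{-(d+1)}$ decay of $k_j$, which is just enough for integrability in $\R^d$.
\item You finish with Young's inequality, which covers $p=\infty$ as Lemma~\ref{velocity gradient 1} requires.
\end{itemize}

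Your Step 1 also makes precise what $T_m\Delta_j f$ means for general $f\in L^p$, namely $K_j\ast\Delta_j f$ with $K_j\in L^1$. This matters for $p=\infty$, where multiplying $\widehat{\Delta_j f}$ by a merely $C^{d+1}$ symbol is not obviously meaningful; the paper leaves this point implicit.
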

\noindent
Next we define Besov spaces, which provide us with the necessary tools to estimate the velocity gradient.

\begin{definition}
\label{besov space}
Let $p,q \in [1,\infty]$ and $ s \in \R$. We define the Besov space $B^{s}_{p,q}(\R^d)$ as the set of $f \in \mathcal{S}'(\R^d)$ such that the following norm (with the obvious modification when $q=\infty$) is finite
\begin{equation}
\label{inhomogeneous Besov space}
\norm{f}{B^{s}_{p,q}} = \left(\sum_{j \geq -1} 2^{jsq}\norm{\Delta_jf}{L^p}^q\right)^{1/q}.\\
\end{equation}
\end{definition}
\noindent
As a trivial consequence of Bernstein's Lemma, $H^1(\R^2) \hookrightarrow B^0_{\infty,2}(\R^2)$. Indeed, note that
$$
\norm{f}{B^0_{\infty,2}}^2 = \sum_{j \geq -1} \norm{\Delta_jf}{L^{\infty}}^2  \leq C\sum_{j \geq -1} 2^{2j}\norm{\Delta_jf}{L^2}^2 = C\norm{f}{H^1}^2,
$$
where we used equivalence of $\norm{\cdot}{H^s}$ and $\norm{\cdot}{B^{s}_{2,2}}$ (see \cite{Chemin95} for example) in the last step.
\subsection{Estimates for the Velocity Gradient}
\begin{lemma}
\label{velocity gradient 1}
Let $\gamma>\frac{1}{2}$ and $f \in L^2 \cap B^0_{\infty,2}(\R^2)$. 
Then
$$
\norm{\nabla \nabla^{\perp}(-\Delta)^{-1}T_{\gamma}f}{L^{\infty}} \leq C\norm{f}{L^2} + C_{\gamma}\norm{f}{B^0_{\infty,2}}.
$$
\end{lemma}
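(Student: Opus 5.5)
We decompose $f$ with the inhomogeneous Littlewood--Paley operators, $f=\sum_{j\ge -1}\Delta_j f$. Write $K:=\nabla\nabla^{\perp}(-\Delta)^{-1}T_{\gamma}$. It is a Fourier multiplier with symbol
$$m(\xi)=\frac{\xi\otimes\xi^{\perp}}{|\xi|^2}\,\ln^{-\gamma}(e+|\xi|),$$
up to harmless constants coming from the $2\pi$ convention. We then have
$$\norm{Kf}{L^\infty}\le \norm{K\Delta_{-1}f}{L^\infty}+\sum_{j\ge 0}\norm{K\Delta_j f}{L^\infty},$$
and we estimate the low-frequency piece and the dyadic pieces separately.

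\textbf{Low frequencies.} The piece $\Delta_{-1}f$ has Fourier support in $B_{5/6}(0)$, and the symbol $m$ is bounded by $1$. We therefore avoid using boundedness of $K$ on $L^\infty$, since the Riesz-type part fails there. Instead we go through the Fourier transform:
$$\norm{K\Delta_{-1}f}{L^\infty}\le \int_{|\xi|\le 5/6}|m(\xi)||\hat\chi(\xi)||\hat f(\xi)|\,d\xi\le C\norm{\hat f}{L^2}=C\norm{f}{L^2},$$
where the middle step is Cauchy--Schwarz on a bounded set, followed by Plancherel. This is where the $\norm{f}{L^2}$ term in the statement comes from.

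\textbf{High frequencies.} For $j\ge 0$ we apply Lemma \ref{generalized bernstein} with $d=2$ and $\tilde m(N)=\ln^{-\gamma}(e+N)$. The hypothesis needs checking: on the annulus $N/8\le|\xi|\le 8N$,
$$|\partial^\alpha m(\xi)|\le C N^{-|\alpha|}\ln^{-\gamma}(e+N)\quad\text{for }|\alpha|\le 3.$$
This holds for the following reasons. The factor $\xi\otimes\xi^\perp/|\xi|^2$ is smooth and homogeneous of degree $0$, so each derivative costs $|\xi|^{-1}\sim N^{-1}$. Each derivative of $\ln^{-\gamma}(e+|\xi|)$ produces a factor of size $\lesssim (e+|\xi|)^{-1}\ln^{-\gamma}(e+|\xi|)$, which is at most $CN^{-1}\ln^{-\gamma}(e+N)$. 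Finally, $\ln(e+|\xi|)\sim\ln(e+N)$ on the annulus, with constants independent of $N$. Small $N$ causes no trouble, since $N$ is dyadic and we only need $N=2^j\ge1$. The Leibniz rule then gives the bound. The lemma yields
$$\norm{K\Delta_j f}{L^\infty}\le C\,\ln^{-\gamma}(e+2^j)\norm{\Delta_j f}{L^\infty}\le C(1+j)^{-\gamma}\norm{\Delta_jf}{L^\infty},$$
using $\ln(e+2^j)\ge c(1+j)$.

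\textbf{Summation.} By Cauchy--Schwarz,
$$\sum_{j\ge0}(1+j)^{-\gamma}\norm{\Delta_jf}{L^\infty}\le\Big(\sum_{j\ge0}(1+j)^{-2\gamma}\Big)^{1/2}\norm{f}{B^0_{\infty,2}},$$
and the sum is finite exactly when $\gamma>\tfrac12$. This defines $C_\gamma$.

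The only step needing genuine care is verifying the symbol hypothesis of Lemma \ref{generalized bernstein} uniformly in $N$, and in particular handling the logarithmic factor's derivatives. Everything else is routine. We should also note that the dyadic series converges absolutely in $L^\infty$, so the bound applies to $Kf$ itself and the initial triangle-inequality decomposition is justified.
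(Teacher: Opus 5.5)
Your proposal is correct and follows the paper's proof essentially step for step. It uses the same inhomogeneous Littlewood--Paley split, the low-frequency bound via Fourier inversion and Cauchy--Schwarz against $\|f\|_{L^2}$, Lemma \ref{generalized bernstein} with $\tilde m(2^j)=\ln^{-\gamma}(e+2^j)\lesssim(1+j)^{-\gamma}$ on the dyadic pieces, and Cauchy--Schwarz in $j$; you additionally verify the symbol hypothesis, which the paper leaves implicit. One minor point: the lemma as stated requires the symbol bound for every dyadic $N>0$, not only $N\ge 1$, but your verification already covers small $N$, since each derivative of $\ln^{-\gamma}(e+|\xi|)$ costs at most a factor $|\xi|^{-1}\sim N^{-1}$ for all $\xi\neq 0$.
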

\begin{proof}
Using a Littlewood-Paley decomposition, we write 
\begin{equation}
\label{vg1 e1}
\norm{\nabla \nabla^{\perp}(-\Delta)^{-1}T_{\gamma}f}{L^{\infty}} \leq \norm{\Delta_{-1}\nabla \nabla^{\perp}(-\Delta)^{-1}T_{\gamma}f}{L^{\infty}} + \sum_{j \geq 0}\norm{\Delta_j\nabla \nabla^{\perp}(-\Delta)^{-1}T_{\gamma}f}{L^{\infty}}. 
\end{equation}
To estimate the low frequencies, we see that 
\begin{equation}
\label{vg1 e2}
\norm{\Delta_{-1}\nabla \nabla^{\perp}(-\Delta)^{-1}T_{\gamma}f}{L^{\infty}} \leq \int_{\R^2} \left|\hat{\chi}(\xi) \ln^{-\gamma}(e+|\xi|) \hat{\omega}(\xi)\right|\,d\xi \leq C\norm{\hat{\chi}}{L^2}\norm{\hat{f}}{L^2} \leq C\norm{f}{L^2},
\end{equation}
where in the second inequality we used that $\ln^{-\gamma}(e+|\xi|)$ is bounded near the origin. To estimate the high frequencies, we apply Lemma \ref{generalized bernstein} to obtain
\begin{equation}
\label{vg1 e3}
\sum_{j \geq 0} \norm{\Delta_j \nabla \nabla^{\perp}(-\Delta)^{-1}T_{\gamma}f}{L^{\infty}} \leq C\sum_{j \geq 0}(1+j)^{-\gamma}\norm{\Delta_jf}{L^\infty} \leq C\left(\sum_{j \geq 0} (1+j)^{-2\gamma}\right)^{\frac{1}{2}} \norm{f}{B^0_{\infty,2}}.
\end{equation}
The last sum is finite when $\gamma >\frac{1}{2}$. Combining \eqref{vg1 e1}, \eqref{vg1 e2}, and \eqref{vg1 e3} yields the result.
\end{proof}
We now state the key theorem we use to control the $B^0_{\infty,2}(\R^2)$ norm of the vorticity. The following result comes from \cite{CF23}. The authors state that the proof is classical and do not include it. However, since this estimate is central to our main theorem, we supply a proof. 

\begin{theorem}\label{Key Besov estimate} Let $r \in [1,\infty]$, $a_0 \in L^2 \cap B^0_{\infty,r}(\R^d)$, and $a$ be a solution to
\begin{equation}
\label{transport}
\tag{$\text{Tr}$}
\begin{cases}
\partial_ta + (v\cdot \nabla)a= 0,\\
\text{div }v=0,\\
a(0,x)=a_0(x) ,
\end{cases}
\end{equation}
such that 
$
a \in C^0([0,T]; L^2 \cap B^0_{\infty,r}(\R^d))$ and $ 
 v \in L^1([0,T]; W^{1,\infty}(\R^d)).$
Then for any $t \leq T$ one has
\begin{equation}
\label{key besov estimate eqn}
\norm{a(t)}{B^0_{\infty,r}} \leq C\norm{a_0}{B^0_{\infty,r}} \left(1+\int_0^t \norm{\nabla v(\tau)}{L^{\infty}}\,d\tau\right).
\end{equation}
\end{theorem}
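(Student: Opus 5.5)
This is the classical Vishik-type estimate: the $B^0_{\infty,r}$ norm of a transported quantity grows only linearly in $\int_0^t\|\nabla v\|_{L^\infty}$, not exponentially. The idea is to split the data in frequency, use different estimates for the pieces near and far from a given output frequency, and choose the threshold adaptively.

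\textbf{Decomposition.} Fix $t\le T$ and split $a_0=\sum_{k\ge -1}\Delta_k a_0$. By linearity, $a(t)=\sum_k a_k(t)$, where $a_k$ solves \eqref{transport} with data $\Delta_k a_0$. Since $v$ is Lipschitz, each $a_k(t)=\Delta_k a_0\circ X_t^{-1}$, with $X_t$ the measure-preserving flow of $v$. We need to bound $\|\Delta_j a_k(t)\|_{L^\infty}$ for every pair $(j,k)$. Write $V(t)=\int_0^t\|\nabla v\|_{L^\infty}\,d\tau$, and fix an integer $N\ge0$ to be chosen later.

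\textbf{Two estimates.} Each estimate would be proved by a standard commutator argument.
\begin{itemize}
\item[(i)] Regularity propagation in $B^{s}_{\infty,\infty}$ for $|s|<1$:
$$\|a_k(t)\|_{B^{\pm 1/2}_{\infty,\infty}}\le C e^{CV(t)}\|\Delta_k a_0\|_{B^{\pm1/2}_{\infty,\infty}}\le C e^{CV(t)}2^{\pm k/2}\|\Delta_k a_0\|_{L^\infty}.$$
This gives $\|\Delta_j a_k(t)\|_{L^\infty}\le Ce^{CV}2^{-|j-k|/2}\|\Delta_k a_0\|_{L^\infty}$, using the $+1/2$ exponent when $k<j$ and the $-1/2$ exponent when $k>j$.
\item[(ii)] Trivially, $\|a_k(t)\|_{L^\infty}=\|\Delta_k a_0\|_{L^\infty}$, so $\|\Delta_j a_k(t)\|_{L^\infty}\le C\|\Delta_k a_0\|_{L^\infty}$.
\end{itemize}

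\textbf{Combining on a short interval.} Use (i) when $|j-k|>N$ and (ii) when $|j-k|\le N$. This gives
$$\|\Delta_j a(t)\|_{L^\infty}\le C\sum_{|j-k|\le N}\|\Delta_k a_0\|_{L^\infty}+Ce^{CV}\sum_{|j-k|>N}2^{-|j-k|/2}\|\Delta_k a_0\|_{L^\infty}.$$
Taking the $\ell^r$ norm in $j$ and applying Young's inequality for convolutions yields
$$\|a(t)\|_{B^0_{\infty,r}}\le C\big(N+1+e^{CV}2^{-N/2}\big)\|a_0\|_{B^0_{\infty,r}}.$$
Choose $N\approx 2CV/\ln 2$. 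Then $\|a(t)\|\le C(1+V(t))\|a_0\|$, but only after the crude step $e^{CV}2^{-N/2}\lesssim 1$, and all of this is valid only while $V(t)$ stays bounded, say $V\le1$.

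\textbf{Extending to all times (main obstacle).} Chaining the short-interval bound over $m\sim V(t)$ subintervals on which $\int\|\nabla v\|\le 1$ would compound the constant into $C^{m}$, which is exponential and therefore not acceptable. The fix I would try first is Vishik's device: apply the splitting directly on the whole interval $[0,t]$, with $N$ chosen as above so that $e^{CV}2^{-N/2}\le1$. Then the near-diagonal count $2N+1\sim V$ is what produces the linear factor, and no iteration is needed.

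The delicate point is that estimate (i) must hold with $e^{CV}$ uniformly for both signs of $s$. For $s=-1/2$ this requires the commutator estimate
$$\sum_j 2^{-j/2}\big\|[v\cdot\nabla,\Delta_j]f\big\|_{L^\infty}\lesssim\|\nabla v\|_{L^\infty}\|f\|_{B^{-1/2}_{\infty,\infty}},$$
and $\operatorname{div}v=0$ is needed precisely here to handle the negative index. I expect this step to be the bulk of the technical work.

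Finally, the hypotheses $a\in C^0(L^2\cap B^0_{\infty,r})$ and $v\in L^1W^{1,\infty}$ are what guarantee that the flow representation is valid and that the sum $a=\sum_k a_k$ converges.
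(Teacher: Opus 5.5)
Your proof is correct, and its skeleton matches the paper's: the splitting $a=\sum_k a_k$, an off-diagonal bound $\|\Delta_j a_k(t)\|_{L^\infty}\le Ce^{CV(t)}2^{-\delta|j-k|}\|\Delta_k a_0\|_{L^\infty}$ played against the trivial bound, Young's inequality, and a cutoff $N\sim V(t)$.

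The difference is in how you get the off-diagonal decay. You invoke the classical propagation estimates for transport in $B^{\pm1/2}_{\infty,\infty}$, which rest on paradifferential commutator bounds, with $\operatorname{div}v=0$ needed for the negative index. That gives decay $2^{-|j-k|/2}$. The paper works only with the Lagrangian formula $a_q(t)=\Delta_q a_0\circ\phi_t^{-1}$:
\begin{itemize}
\item For $j>q$ it uses $\int\varphi_j=0$, the mean value theorem, and the chain-rule bound $\|\nabla a_q(t)\|_{L^\infty}\le\|D\phi_t^{-1}\|_{L^\infty}\,C2^q\|\Delta_q a_0\|_{L^\infty}$.
\item For $q>j$ it writes $\Delta_q a_0=\operatorname{div}(-\nabla(-\Delta)^{-1}\Delta_q a_0)$, changes variables with the measure-preserving flow, and integrates by parts onto $\varphi_j(x-\phi_t(z))$. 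The only cost is $\|D\phi_t\|_{L^\infty}\le Ce^{CV(t)}$.
\end{itemize}
The paper's route is elementary and self-contained, with no commutator estimates. It gives the full decay $2^{-|j-q|}$ because it only needs Lipschitz-type control of a single smooth dyadic block. It uses incompressibility only through the change of variables, in the same case ($q>j$) where you need it for $s=-1/2$. Your route, in exchange, does not depend on an explicit characteristic representation. So it carries over to settings such as transport--diffusion equations, at the price of heavier machinery.

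Two slips to fix:
\begin{itemize}
\item The commutator bound needed to propagate $B^{-1/2}_{\infty,\infty}$ is $\sup_j 2^{-j/2}\|[v\cdot\nabla,\Delta_j]f\|_{L^\infty}\lesssim\|\nabla v\|_{L^\infty}\|f\|_{B^{-1/2}_{\infty,\infty}}$, not a sum over $j$. The summed version would require the $B^{-1/2}_{\infty,1}$ norm on the right.
\item Nothing in your ``short interval'' computation requires $V\le 1$. With $N=\lceil 2CV/\ln 2\rceil$ one has $e^{CV}2^{-N/2}\le 1$ for every $V$. That computation is already the complete proof on $[0,t]$, and the ``main obstacle'' paragraph is unnecessary.
\end{itemize}
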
 

\begin{proof}
For a fixed solution $a\in C^0([0,T]; L^2 \cap B^0_{\infty,r}(\R^d))$, we consider the system
\begin{equation}
\label{Transport q}
\tag{$\text{Tr}_q$}
\begin{cases}
\partial_ta_q + (v\cdot \nabla)a_q= 0,\\
\text{div }v=0,\\
a_q(0,x)=\Delta_qa_0(x).
\end{cases}
\end{equation}
Using the method of characteristics, we see that 
$
a_q(t,x)=\Delta_qa_0(\phi^{-1}_t(x)),
$
where $\phi$ satisfies the ODE
$$
\frac{d}{dt} \phi_t(x) = v(t,\phi_t(x)), \quad \phi(0,x)=x.
$$
Furthermore, since $\text{div }v=0$, we see that $\phi_t$ is measure preserving and 
\begin{equation}
    \label{TRq L2 convergence}
    \begin{split}
    \norm{a(t)-\sum_{q=-1}^n a_q(t)}{L^2} &= \norm{\left(a_0-  \sum_{q=-1}^n \Delta_qa_0\right)(\phi^{-1}_t(x))}{L^2}\\
    &= \norm{a_0 - \sum_{q=-1}^n\Delta_qa_0}{L^2} \to 0 \quad \text{as }n \to \infty.
    \end{split}
\end{equation}
Furthermore, since $\text{div }v=0$, solutions of \eqref{Transport q} satisfy a maximum principle and
\begin{equation}
\label{TRq eq1}
\norm{\Delta_j a_q(t)}{L^\infty} \leq C\norm{a_q(t)}{L^{\infty}} \leq C\norm{\Delta_qa_0}{L^{\infty}}.
\end{equation}
Next, we show 
\begin{equation}
\label{TRq eq 2}
\norm{\Delta_ja_q(t)}{L^\infty} \leq Ce^{CV(t)}2^{-|j-q|}\norm{\Delta_qa_0}{L^{\infty}},
\end{equation}
where 
$$
V(t) = \int_0^t \norm{\nabla v(\tau)}{L^{\infty}}\,d\tau.
$$
Since $v \in L^1([0,T];W^{1,\infty}(\R^d))$, from Gr\"{o}nwall's Lemma, we see that 
\begin{equation}
\label{flow bounds}
\norm{D\phi_t}{L^{\infty}} + \norm{D\phi_{t}^{-1}}{L^{\infty}} \leq Ce^{CV(t)},
\end{equation}
where $Df(x)=\left(\frac{\partial f^i}{\partial_{x_k}}(x)\right)_{i,k=1}^d$ is the Jacobian matrix. To establish \eqref{TRq eq 2} we consider three cases, when $j>q, j<q$, and $j=q$. \\
\\
\textbf{Case 1 ($j>q$):} Since $q \geq -1$, we see that $j \geq 0$ and 
$$
\int_{\R^d} \varphi_j(x)\,dx = \int_{\R^d} \varphi_j(x)e^{2\pi i x \cdot 0}\,dx = \hat{\varphi_j}(0)=0.
$$
From the mean value theorem 
\begin{equation}
\label{case 1 eq 1}
\begin{split}
|\Delta_ja_q(t,x)| &= \left| \int_{\R^d} \varphi_j(y) (a_q(t,x-y)-a_q(t,x))\,dy\right|\\
& \leq \norm{\nabla a_q(t)}{L^{\infty}} \int_{\R^d} |\varphi_j(y)y|\,dy\\
&\leq C2^{-j}\norm{\nabla a_q(t)}{L^{\infty}}.
\end{split}
\end{equation}
From the transport structure of \eqref{Transport q} and the chain rule, we see that 
$$
\nabla a_q(t,x)=\nabla \left((\Delta_qa_0)(\phi_t^{-1}(x))\right) =(D\phi_{t}^{-1}(x))^T(\nabla\Delta_qa_0)(\phi_t^{-1}(x)).
$$
Using \eqref{flow bounds} we see that 
\begin{equation}
\label{case 1 eq 2}
\norm{\nabla a_q(t)}{L^{\infty}} \leq \norm{D\phi_t^{-1}}{L^{\infty}}\norm{\nabla \Delta_qa_0(\phi_t^{-1})}{L^{\infty}}\leq Ce^{CV(t)}2^q\norm{\Delta_qa_0}{L^{\infty}}.
\end{equation}
Combining \eqref{case 1 eq 1} and \eqref{case 1 eq 2} yields
\begin{equation}
\label{case 1}
\norm{\Delta_j a_q(t)}{L^{\infty}} \leq Ce^{CV(t)}2^{q-j}\norm{\Delta_qa_0}{L^{\infty}}.
\end{equation}
\textbf{Case 2 ($q>j$):} Note that 
\begin{equation}
\label{case 2 eq 0}
\Delta_qa_0(x)=\text{div}(-\nabla(-\Delta)^{-1}\Delta_qa_0)(x).
\end{equation}
We first consider when $j \geq 0$. Then, since the flow is measure-preserving, by a change of variables and \eqref{case 2 eq 0}
\begin{equation}
\label{case 2 eq 1}
\begin{split}
|\Delta_ja_q(t,x)|&= \left|\int_{\R^d} \varphi_j(x-y)(\Delta_qa_0)(\phi_t^{-1}(y))\,dy\right|\\
&=\left|\int_{\R^d}\varphi_j(x-\phi_t(z))\text{div}(-\nabla(-\Delta)^{-1}\Delta_qa_0)(z)\,dz\right|\\
&= \left|\int_{\R^d}\nabla_z(\varphi_j(x-\phi_t(z)))\cdot(\nabla(-\Delta)^{-1}\Delta_qa_0)(z)\,dz\right|\\
&\leq C2^{-q}\norm{\Delta_qa_0}{L^{\infty}} \int_{\R^d} |\nabla_z(\varphi_j(x-\phi_t(z)))|\,dz\\
&\leq C2^{-q}\norm{\Delta_qa_0}{L^{\infty}} \norm{D\phi_t}{L^{\infty}} \int_{\R^d} |(\nabla\varphi_j)(x-\phi_t(z))|\,dz\\
&\leq Ce^{CV(t)}2^{j-q}\norm{\Delta_qa_0}{L^{\infty}}.
\end{split}
\end{equation}
A similar argument for $j=-1$ gives
\begin{equation}
\label{case 2 eq 2}
\begin{split}
\norm{\Delta_{-1}a_q(t)}{L^{\infty}} & \leq 2^{-q}\norm{\Delta_qa_0}{L^{\infty}}\norm{D\phi_t}{L^{\infty}}\norm{\nabla \chi}{L^1}\\
&\leq Ce^{CV(t)}2^{-q}\norm{\Delta_qa_0}{L^{\infty}}\\
&\leq Ce^{CV(t)}2^{-|1-q|}\norm{\Delta_qa_0}{L^{\infty}}.
\end{split}
\end{equation}
\textbf{Case 3 ($j=q$):} For the final case, we simply use that 
\begin{equation}
\label{case 3}
\norm{\Delta_ja_q(t)}{L^{\infty}} \leq C\norm{a_q(t)}{L^{\infty}} \leq C\norm{\Delta_qa_0}{L^{\infty}}.
\end{equation}
Combining \eqref{case 1},\eqref{case 2 eq 1}, \eqref{case 2 eq 2}, and \eqref{case 3} yield \eqref{TRq eq 2}. 
\\
\\
Now, from \eqref{TRq eq1} and \eqref{TRq eq 2} we see that 
\begin{equation}
\label{TRq eq3}
\norm{\Delta_ja_q(t)}{L^{\infty}} \leq C\min\left\{1,e^{CV(t)}2^{-|j-q|}\right\}\norm{\Delta_qa_0}{L^{\infty}}.
\end{equation}
Since $a(t)=\sum_{q \geq -1} a_q(t)$ in $L^2(\R^d)$, from Bernstein's Lemma one has $\Delta_ja(t)=\Delta_j\sum_{q \geq -1} a_q(t)$ in $L^{\infty}(\R^d)$. It follows that 
\begin{equation}
\label{TRq eq 4}
\norm{\Delta_ja(t)}{L^{\infty}} \leq C\sum_{q \geq -1} \min\left\{1,e^{CV(t)}2^{-|j-q|}\right\}\norm{\Delta_qa_0}{L^{\infty}}.
\end{equation}
Computing the $B^0_{\infty,r}(\R^d)$-norm (with the obvious modification when $r=\infty$), from \eqref{TRq eq 4} we find that 
\begin{equation}
\label{TR eq1}
\begin{split}
\norm{a(t)}{B^0_{\infty,r}} &= \left(\sum_{j \geq -1} \norm{\Delta_ja(t)}{L^{\infty}}^r\right)^{\frac{1}{r}}\\
&\leq C\left(\sum_{j \geq -1}\left(\sum_{q \geq -1} \min\left\{1,e^{CV(t)}2^{-|j-q|}\right\}\norm{\Delta_qa_0}{L^{\infty}}\right)^r\right)^{\frac{1}{r}}\\
&\leq C\norm{a_0}{B^0_{\infty,r}} \sum_{q \in \mathbb{Z}} \min\left\{1,e^{CV(t)}2^{-|q|}\right\},
\end{split}
\end{equation}
where in the last inequality we used Young's convolution inequality for series. We now split the sum when $e^{CV(t)}2^{-|q|}= 1$ and estimate the two terms separately. Hence, we let $N=\left\lceil \frac{CV(t)}{\ln(2)}\right\rceil$ and observe that
\begin{equation}
\label{Tr eq2}
\begin{split}
\sum_{q \in \mathbb{Z}} \min\left\{1,e^{CV(t)}2^{-|q|}\right\}&= \sum_{|q| \leq N}\min\left\{1,e^{CV(t)}2^{-|q|}\right\} +   \sum_{|q| > N}\min\left\{1,e^{CV(t)}2^{-|q|}\right\}\\
&\leq \sum_{|q| \leq N} 1 +   e^{CV(t)}\sum_{|q| > N} 2^{-|q|}\\
&= 2N +1 +Ce^{CV(t)}2^{-N}\\
&\leq C(1+V(t)),
\end{split}
\end{equation}
which concludes the proof.
\end{proof}

\section{Proof of Main Theorem}
\label{section proof}
\noindent
Local well-posedness has already been established in \cite{CW12} and \cite{DL15}. Hence, we only need to show we can extend the solution globally in time. It suffices to control the $\dot{H}^{-1}\cap \dot{H}^{1}(\R^2)$-norm of the solution. We let $T^{*} \in (0,\infty]$ denote the maximal time of existence for the local solution $\omega \in C^0([0,T^*);\dot{H}^{-1}\cap \dot{H}^{1}(\R^2))$. Following the footsteps of \cite{DL15}, we see for all $0 \leq t< T^*$
\begin{equation}
\label{local estimate}
\begin{split}
\frac{d}{dt} \norm{\omega(t)}{\dot{H}^{-1}}^2 &\leq C_{\gamma}\left( \norm{\omega_0}{L^2}\norm{\omega(t)}{\dot{H}^{-1}}^2 + \norm{\omega_0}{L^2}^2\norm{\omega(t)}{\dot{H}^{-1}}\right),\\
\frac{d}{dt} \norm{\omega(t)}{\dot{H}^{1}}^2 &\leq C \norm{\nabla\nabla^{\perp} (-\Delta)^{-1}T_{\gamma}\omega(t)}{L^{\infty}} \norm{\omega(t)}{\dot{H}^1}^2.
\end{split}
\end{equation}
From Gr\"{o}nwall's Lemma, we see that the $\dot{H}^{-1}(\R^2)$ norm is controlled for all time. Hence we only need to control the $\dot{H}^1(\R^2)$ norm of the solution.
Using Lemma \ref{velocity gradient 1} and the conservation of $\norm{\omega(t)}{L^2}$, we bound the velocity gradient by
\begin{equation}
\label{MTP e1}
\norm{\nabla\nabla^{\perp} (-\Delta)^{-1}T_{\gamma}\omega(t)}{L^{\infty}}  \leq  C\norm{\omega_0}{L^2} + C_{\gamma}\norm{\omega(t)}{B^0_{\infty,2}}.
\end{equation}
 Theorem \ref{Key Besov estimate} gives
\begin{equation}
\label{MTP e2}
\norm{\omega(t)}{B^0_{\infty,2}} \leq C\norm{\omega_0}{B^0_{\infty,2}}\left(1+\int_0^t \norm{\nabla \nabla^{\perp}(-\Delta)^{-1}T_{\gamma}\omega(\tau)}{L^{\infty}}\,d\tau\right).
\end{equation}
Hence, combining \eqref{MTP e1} and \eqref{MTP e2} and applying Gr\"{o}nwall's Lemma yields
\begin{equation}
\label{MTP e3}
\norm{\nabla\nabla^{\perp}(-\Delta)^{-1}T_{\gamma}\omega(t)}{L^\infty} \leq (C\norm{\omega_0}{L^2}+C_{\gamma}\norm{\omega_0}{B^0_{\infty,2}})e^{C_{\gamma}\norm{\omega_0}{B^0_{\infty,2}}t}.
\end{equation}
Applying Gr\"{o}nwall's Lemma to \eqref{local estimate}$_2$ and using \eqref{MTP e3} yields
\begin{equation}
\label{MTP e4}
\begin{split}
\norm{\omega(t)}{\dot{H^1}} &\leq \norm{\omega_0}{\dot{H}^1}\exp\left(C\int_0^t \norm{\nabla \nabla^{\perp}(-\Delta)^{-1}T_{\gamma}\omega(\tau)}{L^{\infty}}\,d\tau\right) \\&\leq \norm{\omega_0}{\dot{H}^1}e^{(C\norm{\omega_0}{L^2}t+C_{\gamma}\norm{\omega_0}{B^0_{\infty,2}}t)e^{C_{\gamma}\norm{\omega_0}{B^0_{\infty,2}}t}}.
\end{split}
\end{equation}
Hence, if $T^*$ is finite, we have shown
$$
\sup_{0 \leq t < T^*} \left(\norm{\omega(t)}{\dot{H}^1} + \norm{\omega(t)}{\dot{H}^{-1}} \right)<\infty.
$$
Hence, by a standard bootstrapping argument, we can extend the solution in time past $T^*$, contradicting the maximality of $T^*$. It follows that $T^*=\infty.$

\bibliographystyle{alpha}
\bibliography{logglobal}

\end{document}